\def\NZQ{\Bbb}               
\def\NN{{\NZQ N}}
\def\Bu{{\mathfrak{Bu}}}
\def\Sc{{\mathfrak{Sc}}}
\def\BuG{{\mathfrak{BuG}}}
\def\kk{{\Bbbk}}
\def\Fc{{\mathcal{F}}}
\def\B'c{{\mathcal{B'}}}
\def\U'c{{\mathcal{U'}}}
\def\opn#1#2{\def#1{\operatorname{#2}}} 
\opn\chara{char}
\opn\length{\ell}
\opn\projdim{proj\,dim}
\opn\injdim{inj\,dim}
\opn\ini{in}
\opn\rank{rank}
\opn\Tiefe{Tiefe}
\opn\grade{grade}
\opn\height{height}
\opn\embdim{emb\,dim}
\opn\codim{codim}
\opn\Tr{Tr}
\opn\bigrank{big\,rank}
\opn\superheight{superheight}\opn\lcm{lcm}
\opn\trdeg{tr\,deg}%
\opn\reg{reg}
\opn\lreg{lreg}
\opn\deg{deg}
\opn\lcm{lcm}
\opn\syz{syz}
\opn\Set{Set}
\opn\Max{Max}
\opn\div{div}
\opn\Div{Div}
\opn\cl{cl}
\opn\Cl{Cl}
\opn\Spec{Spec}
\opn\Supp{Supp}
\opn\supp{supp}
\opn\Sing{Sing}
\opn\Ass{Ass}
\opn\Min{Min}
\opn\bmo{\mathbf{m}}
\opn\XT{X_{Taylor}}
\opn\Ann{Ann}
\opn\Rad{Rad}
\opn\Soc{Soc}
\opn\MinGen{\mathcal{G}}
\opn\Ker{Ker}
\opn\Coker{Coker}
\opn\Im{Im}
\opn\Hom{Hom}
\opn\Tor{Tor}
\opn\Ext{Ext}
\opn\End{End}
\opn\Aut{Aut}
\opn\id{id}
\opn\nat{nat}
\opn\GL{GL}
\opn\SL{SL}
\opn\mod{mod}
\opn\ord{ord}
\opn\depth{depth}
\opn\set{set}
\opn\Shad{Shad}
\opn\mdeg{mdeg}
\opn\aff{aff}
\opn\con{conv}
\opn\relint{relint}
\opn\st{st}
\opn\lk{lk}
\opn\cn{cn}
\opn\core{core}
\opn\vol{vol}
\opn\Cut{Cut}
\opn\Mon{Mon}
\opn\Buch{Buch}
\opn\LCM{LCM}
\opn\hull{hull}
\opn\gr{gr}
\def\pot#1#2{#1[\kern-0.28ex[#2]\kern-0.28ex]}
\opn\dirlim{\underrightarrow{\lim}}
\opn\invlim{\underleftarrow{\lim}}
\def\pnt{{\raise0.5mm\hbox{\large\bf.}}}
\def\Implies{\ifmmode\Longrightarrow \else
     \unskip${}\Longrightarrow{}$\ignorespaces\fi}
\def\implies{\ifmmode\Rightarrow \else
     \unskip${}\Rightarrow{}$\ignorespaces\fi}
\def\iff{\ifmmode\Longleftrightarrow \else
     \unskip${}\Longleftrightarrow{}$\ignorespaces\fi}
  \CheckCommand*\refstepcounter[1]{\stepcounter{#1}%
      \protected@edef\@currentlabel
       {\csname p@#1\endcsname\csname the#1\endcsname}%
  }
  \renewcommand*\refstepcounter[1]{\stepcounter{#1}%
    \protected@edef\@currentlabel
      {\csname p@#1\expandafter\endcsname\csname the#1\endcsname}%
  }
  \def\labelformat#1{\expandafter\def\csname p@#1\endcsname##1}
  \DeclareRobustCommand\Ref[1]{\protected@edef\@tempa{\ref{#1}}%
     \expandafter\MakeUppercase\@tempa
  }
  \newcommand{\numberlike}[2]{%
     \expandafter\def\csname c@#1\endcsname{%
         \expandafter\csname c@#2\endcsname}%
  }
  \def\DefaultNumberTheoremWithin{section}
  \theoremstyle{plain}
  \newtheorem{Lemma}{Lemma}
     \numberwithin{Lemma}{\DefaultNumberTheoremWithin}
     \numberwithin{Claim}{\DefaultNumberTheoremWithin}
  \newtheorem{Theorem}{Theorem}
     \numberwithin{Theorem}{\DefaultNumberTheoremWithin}
  \newtheorem{Corollary}{Corollary}
     \numberwithin{Corollary}{\DefaultNumberTheoremWithin}
  \newtheorem{Proposition}{Proposition}
     \numberwithin{Proposition}{\DefaultNumberTheoremWithin}
  \newtheorem{Conjecture}{Conjecture}
     \numberwithin{Conjecture}{\DefaultNumberTheoremWithin}
  \theoremstyle{definition}
  \newtheorem{Definition}{Definition}
     \numberwithin{Definition}{\DefaultNumberTheoremWithin}
  \theoremstyle{definition}
     \numberwithin{Question}{\DefaultNumberTheoremWithin}
  \theoremstyle{definition}
     \numberwithin{Problem}{\DefaultNumberTheoremWithin}
  \theoremstyle{remark}
  \newtheorem{Remark}{Remark}
     \numberwithin{Remark}{\DefaultNumberTheoremWithin}
  \theoremstyle{remark}
  \newtheorem{Example}{Example}
     \numberwithin{Example}{\DefaultNumberTheoremWithin}
\let\epsilon=\varepsilon
\let\phi=\varphi
\let\kappa=\varkappa
\title{The Buchberger resolution}
  \author{Anda Olteanu \and Volkmar Welker}
 \thanks{The first author was supported by a grant of the Romanian Ministry of Education, CNCS-UEFISCDI, project number PN-II-RU-PD-2012-3-0235 and the second author was partially supported by MSRI}
\address{Fachbereich Mathematik und Informatik, Philipps-Universit\"at Marburg, 35032 Marburg, Germany}\email{welker@mathematik.uni-marburg.de}
\address{University Politehnica of Bucharest, Faculty of Applied Sciences,
Splaiul Independen\c tei, No.
313, 060042, Bucharest, Romania}
\address{Simion Stoilow Institute of Mathematics of the Romanian Academy, Research group of the project PD-3-0235, P.O.Box 1-764, Bucharest 014700, Romania}\email{olteanuandageorgiana@gmail.com} 
\begin{document}

\maketitle
\begin{abstract} We define the Buchberger resolution, which is a graded free
  resolution of a monomial ideal in a polynomial ring. Its construction
  uses a generalization of the Buchberger graph and encodes much of the 
  combinatorics of the Buchberger algorithm. The Buchberger resolution 
  is a cellular resolution that when it is minimal coincides with the Scarf resolution.
  The simplicial complex underlying the Buchberger resolution is of interest 
  for its own sake and its combinatorics is not fully understood.
  We close with a conjecture on the clique complex of the Buchberger graph.\\
  
  \textbf{Keywords}: Monomial resolution, cellular resolution, Scarf complex, minimal free resolution.\\ 

\textbf{MSC}: Primary 13D02 Secondary 05C10,  13C14.
\end{abstract}

\section*{Introduction}

Let $S=\kk[x_1,\ldots,x_n]$ be the polynomial ring in $n$ variables over a 
field $\kk$ and $I$ a monomial ideal of $S$. The construction of the minimal 
graded free resolution of $I$ over $S$ is an important problem in 
commutative algebra. Even though there are good algorithms that 
construct the minimal graded free resolution for general monomial
ideals there is no combinatorial construction of the minimal free
graded resolution known. In this paper we provide a construction of a 
new free resolution for all monomial ideals and then identify the 
ideals for which it is minimal. We call this resolution the 
Buchberger resolution since its combinatorics is
derived from the Buchberger algorithm, an idea first employed in \cite{MS1}
in the three variable case. The Buchberger resolutions is a cellular 
resolution. Roughly speaking a cellular resolution is given by 
simplicial complex (or more generally CW-complex) with a labelling of 
its vertices by
generators of a monomial ideal and an induced labelling of its simplices by the lcms of their vertices.
If the conditions from \cite[Lem. 2.2]{BPS} are fulfilled, then the homogenized
simplicial chain complex of the simplicial complex with coefficients in the polynomial ring
supports a free resolution of the ideal.

One prominent example of a cellular is the Taylor resolution \cite{T} which is defined 
for arbitrary monomial ideals and supported by the full simplex on the set
$\MinGen(I)$ of minimal monomial generators of $I$ -- rarely the Taylor resolution is minimal.
In Theorem 3.2 from \cite{BPS} it is shown that there is a subcomplex of the simplex on 
$\MinGen(I)$ -- the Scarf complex -- which for example supports a minimal 
free resolution for generic monomial ideals, but that it does not even support a free resolution 
in general. In \cite{MS1} the
Buchberger graph of a monomial ideal is studied and, in the case of three variables,
its planar embeddings are used to define a simplicial complex supporting a minimal free 
resolution for strongly generic ideals. Our Buchberger resolution will be supported on a 
simplicial complex that generalizes the Scarf complex and planar map of a Buchberger graph. It is shown to
coincide with the Scarf complex and planar map of the Buchberger graph in cases where the Buchberger resolution is minimal.
For references to other constructions of cellular resolutions we refer the reader to \cite{P}.

The paper is organized as follows: In Section \ref{sec1} we fix the notation 
and recall the concept of the Buchberger graph associated to a monomial ideal
from \cite{MS1}.
Then we define the Buchberger complex and show that it is a 
contractible simplicial complex. Therefore, by inductive reasoning and
\cite[Lem. 2.2]{BPS}, it supports a graded free resolution, which we
call the Buchberger resolution. 
Section \ref{sec2} is devoted to the relation between the Buchberger resolution and 
the Scarf complex. The Buchberger resolution turns out to be minimal 
precisely when the Scarf complex is a resolution.

\section{Cellular resolutions, the Buchberger graph and the Buchberger complex}
\label{sec1}

We will start this section by recalling the notion of a cellular resolution of a monomial ideal $I$
in the polynomial ring $S=\kk[x_1,\ldots,x_n]$ over a field $\kk$
in its original definition from \cite{BPS}.

Let $\Delta$ be a simplicial complex whose vertices are labeled bijectively 
by the monomials from the set of minimal monomial generators $\MinGen(I)$ of $I$.
Each face $F\in\Delta$ is labeled by the least common multiple of its 
vertices, which we denote by $\bmo_F$.
The multidegrees of these monomials define an $\NN^n$-grading of $\Delta$.
Let $\Fc_{\Delta}$ be the $\NN^n$-graded chain complex of 
$\Delta$ over $S$ with differentials homogenized with respect to
the grading (see \cite{BPS} for details).

\begin{Proposition}[{\cite[Lemma 2.2]{BPS}}]
   \label{BPScriterion}
   The complex $\Fc_\Delta$ is exact and defines a free resolution of $I$
   if and only if, for every monomial 
   $m$, the simplicial
   complex $\Delta[m] = \{F \in\Delta\ |\bmo_F \mbox{ divides }m\}$ is 
   empty or acyclic over $\kk$.
\end{Proposition}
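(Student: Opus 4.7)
The plan is to exploit the $\NN^n$-multigrading carried by $\Fc_\Delta$ and reduce the claim to a statement about the reduced simplicial homology of the subcomplexes $\Delta[m]$. Augment $\Fc_\Delta$ by the $S$-linear map $\Fc_\Delta\to I$ sending the basis element $e_v$ attached to a vertex $v$ of $\Delta$ to its label $m_v$. Since the differential of $\Fc_\Delta$ is the simplicial differential homogenized with respect to the $\NN^n$-grading, the augmented complex $\Fc_\Delta\to I\to 0$ is a complex of $\NN^n$-graded $S$-modules. Consequently, $\Fc_\Delta$ resolves $I$ if and only if for every $\ab\in\NN^n$ the $\kk$-linear strand $(\Fc_\Delta\to I)_\ab$ is exact.

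Next, I would identify that strand with the augmented simplicial chain complex of $\Delta[m]$, where $m=\xb^{\ab}$. The free summand $S(-\deg \bmo_F)$ indexed by a face $F\in\Delta$ contributes a one-dimensional $\kk$-space in multidegree $\ab$ exactly when $\bmo_F$ divides $m$; hence $(\Fc_\Delta)_\ab$ has a natural $\kk$-basis indexed by the faces of $\Delta[m]$, homologically graded by cardinality. The homogenized boundary $e_F\mapsto\sum_{v\in F}\pm(\bmo_F/\bmo_{F\setminus v})\,e_{F\setminus v}$ specializes in multidegree $\ab$ to the usual simplicial boundary, because each monomial coefficient becomes the canonical identification between one-dimensional graded pieces. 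Similarly, the augmentation $(F_0)_\ab\to I_\ab$ is the sum-of-coefficients map $\kk^{V(\Delta[m])}\to\kk$ when $m\in I$, and the zero map (with $I_\ab=0$) when $m\notin I$.

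Putting these identifications together, the $\ab$-th strand is isomorphic to the augmented reduced chain complex $\tilde C_\bullet(\Delta[m];\kk)$. Its exactness is equivalent to $\tilde H_i(\Delta[m];\kk)=0$ for all $i\geq -1$, which is precisely the condition that $\Delta[m]$ be either empty (if $m\notin I$, this is automatic since no generator divides $m$) or $\kk$-acyclic. Quantifying over all $\ab\in\NN^n$, and noting that multidegrees not of the form $\deg(\bmo_F)\wedge\deg(m)$ yield trivially exact strands, one obtains the stated equivalence.

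The main obstacle I anticipate is the bookkeeping in the middle step: one must verify carefully that the signs in the homogenized differential, together with the shifts $S(-\deg\bmo_F)$, really do assemble into the standard simplicial boundary of $\Delta[m]$ under the identifications above, and that the augmentation matches the standard one. Everything else is formal once the $\NN^n$-grading is in hand.
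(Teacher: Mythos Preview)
The paper does not supply its own proof of this proposition; it is quoted verbatim as \cite[Lemma~2.2]{BPS} and used as a black box. Your argument is correct and is essentially the standard proof from the original reference: pass to multigraded strands, identify the degree-$\ab$ piece of the augmented complex with the augmented simplicial chain complex of $\Delta[\xb^{\ab}]$ over $\kk$, and conclude. The only cosmetic issue is the phrase ``multidegrees not of the form $\deg(\bmo_F)\wedge\deg(m)$,'' which is garbled; but it is also superfluous, since you have already quantified over all $\ab\in\NN^n$ and handled each strand.
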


If the complex $\Fc_{\Delta}$ is exact then it is called
\textit{the resolution supported by the (labeled) simplicial complex} $\Delta$.
Moreover, if $\Fc_\Delta$ is exact, one may determine whether it is a 
minimal free resolution of $I$ or not.

\begin{Proposition}[{\cite[Remark 1.4]{BS}}] \label{BPSminimal}
  Let $\Fc_{\Delta}$ be a free resolution of the monomial ideal $I$ supported by the
  labeled simplicial complex $\Delta$. Then $\Fc_\Delta$ is a minimal free resolution
  if and only if any two 
  comparable faces $F\subset G$ of the complex $\Delta$ have distinct 
  degrees, that is $\bmo_F \neq \bmo_G$.
\end{Proposition}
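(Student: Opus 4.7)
The plan is to apply the standard criterion that a graded free resolution of a monomial ideal over $S$ is minimal precisely when every entry of every differential matrix lies in the maximal ideal $\mm = (x_1, \ldots, x_n)$. The first step is to write down the differential of $\Fc_\Delta$ explicitly in the basis of faces. From the definition recalled at the beginning of this section, the homogenized simplicial boundary of an oriented face $G = \{v_0, \ldots, v_i\}$ takes the form
$$\partial G \;=\; \sum_{j=0}^{i} (-1)^j \, \frac{\bmo_G}{\bmo_{G \setminus v_j}} \cdot (G \setminus v_j),$$
so the only potentially nonzero matrix entries of $\partial$ correspond to codimension-one inclusions $F = G \setminus v_j \subset G$, and each such entry is the monomial $\pm\, \bmo_G / \bmo_F$ of $S$.

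Since a monomial in $S$ is a unit if and only if it equals $1$, such an entry lies in $\mm$ if and only if $\bmo_F \neq \bmo_G$. Consequently, the minimality criterion reduces to requiring $\bmo_F \neq \bmo_G$ for every codimension-one comparable pair $F \subset G$ of $\Delta$.

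It remains to upgrade this codimension-one condition to the stated condition on arbitrary comparable pairs. One direction is trivial. For the converse, suppose $F \subsetneq G$ satisfies $\bmo_F = \bmo_G$, and pick any vertex $v \in G \setminus F$. Then $F \subseteq G \setminus v \subsetneq G$, and the divisibility chain $\bmo_F \mid \bmo_{G \setminus v} \mid \bmo_G$ together with $\bmo_F = \bmo_G$ forces $\bmo_{G \setminus v} = \bmo_G$; thus any failure of the general condition produces a codimension-one failure, proving the two formulations equivalent. No substantial obstacle is expected: once the homogenized differential is written down explicitly, the entire argument is a direct inspection of its matrix entries, and the only subtle point is this reduction from general inclusions to codimension-one ones.
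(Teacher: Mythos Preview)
Your argument is correct and is the standard proof of this well-known fact. Note that the paper itself does not prove this proposition: it is stated as a citation of \cite[Remark~1.4]{BS}, so there is no ``paper's own proof'' to compare against. Your write-up recovers exactly the reasoning behind that remark.
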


Next we describe the construction of the Buchberger graph from
\cite{MS} and its connection to minimal free resolutions.

We define a partial order $<$ on $\NN^n$ as follows:
for $\mathbf{a},\mathbf{b}\in\NN^n$, where $\mathbf{a}=(a_1,\ldots,a_n)$,
$\mathbf{b}=(b_1,\ldots,b_n)$ one says that 
$\mathbf{a}<\mathbf{b}$ if $a_i<b_i$ for all $1\leq i\leq n$ such
that $b_i \neq 0$ and $a_i=b_i$ if $b_i=0$. Given a monomial 
$u=x_1^{u_1}\cdots x_n^{u_n}$ in $S$, we denote by $\mdeg(u)$ its
multidegree, that is $\mdeg(u)=(u_1,\ldots,u_n)$. For two monomials
$u,v\in S$, one says that $u$ \textit{properly divides} $v$ if $u$ divides
$v$ and $\mdeg(u)<\mdeg(v)$. We will denote this by $u\mid_p v$.

In \cite{MS1}  Miller and Sturmfels associate to any monomial ideal its
Buchberger graph which first appeared under this name in \cite{MS}. 

\begin{Definition} The \textit{Buchberger graph} of the monomial ideal $I$,
  denoted by $\BuG(I)$, is the graph on vertex set $\MinGen(I)$ with edges
  $F = \{m, m'\}$ for distinct monomials $m,m' \in \MinGen(I)$ such that
  there is no monomial $m'' \in \MinGen(I)$ which properly divides the least
  common multiple $\bmo_F$.
\end{Definition}

The Buchberger graph plays an important role in Gr\"obner basis theory
\cite{MS}, and also appears in the study of special classes of monomial
ideals such as strongly generic ideals.

\begin{Definition} A monomial ideal $I\subseteq S$ is called
  \textit{strongly generic} if, for any two monomials
  $u,v\in \mathcal{G}(I)$, $u=x_1^{u_1}\cdots x_n^{u_n}$ and
  $v=x_1^{v_1}\cdots x_n^{v_n}$, the condition
  ($u_i\neq v_i$ or $u_i=v_i=0$ for all $1\leq i\leq n$)
  is fulfilled.
\end{Definition}

In the case of polynomial rings in three variables, there is a connection
between the minimal graded free resolution of strongly generic ideals
and their corresponding Buchberger graphs.

\begin{Proposition}[{\cite[Proposition 3.9]{MS}}]
 Let $I\subseteq \kk[x,y,z]$ be a strongly generic ideal. Then
 $\BuG(I)$ is planar and connected.
\end{Proposition}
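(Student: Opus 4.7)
The plan is to establish both claims simultaneously via the classical \emph{staircase diagram} of $I$ in $\RR_{\geq 0}^3$. Set
\[
   R(I) \ = \ \bigcup_{m \in \MinGen(I)} \bigl(\mdeg(m) + \RR_{\geq 0}^3\bigr),
\]
and let $\Sigma$ be the topological boundary of $R(I)$ inside $\RR_{\geq 0}^3$. Then $\Sigma$ is a piecewise-linear surface assembled from axis-parallel rectangles, with a distinguished inward corner at $\mdeg(m)$ for every $m \in \MinGen(I)$.

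Next, I would project $\Sigma$ along a generic direction close to $(1,1,1)$ onto a transversal plane $H \subset \RR^3$. Because we are working in three variables, this projection is a homeomorphism of $\Sigma$ onto a connected planar region; in particular $\Sigma$ is connected. Strong genericity of $I$, together with genericity of the projection direction, ensures that the inward corners of $\Sigma$ project to pairwise distinct points of $H$ and that the $1$-skeleton of $\Sigma$ projects without accidental crossings.

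The heart of the argument is a dictionary between algebra and geometry: two corners $\mdeg(m)$ and $\mdeg(m')$ are joined by a $1$-dimensional boundary segment of $\Sigma$ if and only if no third generator $m''\in\MinGen(I)$ satisfies $\mdeg(m'') < \mdeg(\lcm(m,m'))$ in the partial order defining $\BuG(I)$. The ``only if'' direction is geometric: such an $m''$ would carve out a new inward corner of $R(I)$ strictly below $\lcm(m,m')$, separating the corners of $m$ and $m'$ on $\Sigma$. The ``if'' direction relies on strong genericity to rule out degenerate configurations on axis-parallel faces (coincident coordinates, ``flat'' corners, ambiguous adjacencies). Once this dictionary is in place, the projection of the $1$-skeleton of $\Sigma$ gives a planar drawing of $\BuG(I)$, proving planarity, while the connectedness of $\Sigma$ transfers directly to connectedness of $\BuG(I)$.

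The principal obstacle I foresee is precisely this geometric-to-algebraic dictionary: one must describe the local structure of $\Sigma$ at each corner carefully---verifying under strong genericity that each inward corner is an isolated point rather than a higher-dimensional flat---and match the loss of a shared edge with the existence of a proper divisor of the lcm. The remaining steps (choice of projection, planarity of the image, transport of connectedness) are routine once the dictionary is secured, and it is exactly here that the three-variable hypothesis is indispensable, since only then is $\Sigma$ a two-dimensional surface admitting a planar projection.
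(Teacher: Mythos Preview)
The paper does not prove this proposition; it is quoted from \cite[Proposition~3.9]{MS} as background and no argument is supplied here. Your staircase-surface strategy is essentially the one Miller and Sturmfels use in \cite{MS}, so you have independently reconstructed the source argument.

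One calibration is worth making. In \cite{MS} the Buchberger edges are not literally identified with the $1$-cells of $\Sigma$. Rather, for each Buchberger edge $\{m,m'\}$ one draws an axis-parallel ``elbow'' on $\Sigma$ from $\mdeg(m)$ to $\mdeg(m')$ through the point $\mdeg(\lcm(m,m'))$; this lcm lies on $\Sigma$ exactly because no generator properly divides it, which is the defining condition for the edge. Strong genericity then forces distinct elbows to meet only at their endpoints, yielding the planar drawing. This is the precise form of the dictionary you correctly flag as the main obstacle; with it in place, the remainder of your outline---projection to a plane, transfer of connectedness---goes through as you describe.
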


A planar graph $G$ together with a (sufficiently nice) embedding of $G$ into the
plane $\mathbb{R}^2$ is called a \textit{planar map}. The vertices of
the embedded graph can be considered as  $0$-cells, the edges as $1$-cells 
and the regions bounded by $G$ as $2$-cells. Viewed from this perspective the 
embedding defines a CW-complex. In \cite[pp. 51]{MS} it is shown that this 
complex supports a free resolution of some monomial ideal. Moreover:

\begin{Theorem}[{\cite[Theorem 3.11]{MS}}]
\label{BuRes}
 Let $I\subseteq \kk[x,y,z]$ be a strongly generic ideal. Then
 any planar map of $\BuG(I)$ with vertices labelled by the generators
 and edges and faces labelled by the lcms of its vertices supports a 
 minimal free resolution of $I$.
\end{Theorem}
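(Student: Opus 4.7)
The plan is to verify the criteria of Proposition~\ref{BPScriterion} and Proposition~\ref{BPSminimal} for the labelled CW-complex $\Delta$ obtained from a planar embedding of $\BuG(I)$. Concretely, the $0$-cells of $\Delta$ are the elements of $\MinGen(I)$, its $1$-cells are the edges of $\BuG(I)$, and its $2$-cells are the bounded regions of the embedding; each cell $F$ is labelled by $\bmo_F$, the lcm of the generators at its vertices. Planarity and connectedness of $\BuG(I)$ are supplied by the preceding proposition. An important preliminary observation, which follows from strong genericity in three variables, is that every bounded region of the embedding is a triangle; I will take this as part of the setup.

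For minimality I verify the criterion of Proposition~\ref{BPSminimal}, namely $\bmo_F \neq \bmo_G$ for any two comparable faces $F \subsetneq G$. When $F$ is a vertex this is immediate, since minimal generators cannot divide one another. When $F$ is an edge $\{m_1, m_2\}$ contained in the triangular $2$-cell $G = \{m_1, m_2, m_3\}$, the equality $\bmo_F = \bmo_G$ would force $m_3 \mid \lcm(m_1, m_2)$. A short calculation using strong genericity then shows that $m_3$ in fact \emph{properly} divides $\lcm(m_1, m_2)$: for each index $i$ with $(\lcm(m_1,m_2))_i > 0$, some $m_k$ (with $k\in\{1,2\}$) realises the maximum and $(m_k)_i \neq (m_3)_i$ by strong genericity, forcing $(m_3)_i < (m_k)_i$. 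This contradicts the definition of the Buchberger edge $\{m_1, m_2\}$.

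The exactness condition of Proposition~\ref{BPScriterion} is the main content. For any monomial $m \in S$, the subcomplex $\Delta[m]$ coincides with the induced subcomplex of $\Delta$ on the generators dividing $m$, since a face has its lcm dividing $m$ exactly when each of its vertex labels does. I would show, whenever $\Delta[m]$ is non-empty, that it is contractible. Connectedness follows by iterating the Buchberger edge property: if $m', m'' \in \MinGen(I)$ both divide $m$ but $\{m', m''\}$ is not a Buchberger edge, then some $m''' \in \MinGen(I)$ properly divides $\lcm(m', m'')$, which still divides $m$, and the problem reduces to connecting $m'$ to $m'''$ and $m'''$ to $m''$ while staying below $m$.

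The principal obstacle will be ruling out one-dimensional holes in $\Delta[m]$. Since $\Delta$ is a planar triangulation of a disk, a hole would correspond to a triangle $F$ of $\Delta$ lying in the interior of a loop of $\Delta[m]$ but with some vertex $m^{\ast}$ not dividing $m$; such $m^{\ast}$ would be an interior vertex of the embedding surrounded only by generators dividing $m$. A local analysis of the Buchberger edges incident to $m^{\ast}$, again leveraging strong genericity and the defining property of a Buchberger edge, produces a contradiction. Combined with connectedness and $\dim \Delta[m] \leq 2$, this yields contractibility of $\Delta[m]$ and completes the verification of Proposition~\ref{BPScriterion}.
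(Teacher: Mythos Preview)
The paper does not give a proof of this theorem; it is quoted verbatim from \cite[Theorem~3.11]{MS} as background for the subsequent definition of the Buchberger complex, so there is no argument in the paper to compare yours against.

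Regarding your sketch on its own terms: the overall strategy of verifying the hypotheses of \ref{BPScriterion} and \ref{BPSminimal} is sound, and your minimality argument using strong genericity is correct. Two points, however, are genuine gaps rather than routine omissions. First, the claim that every bounded region of the planar map is a triangle is not something you can simply ``take as part of the setup'': the theorem asserts the conclusion for \emph{any} planar map of $\BuG(I)$, and the triangularity of the regions in the staircase embedding of \cite{MS} is itself a consequence of the identification of bounded regions with irreducible components of $I$, which is where most of the work in the original proof lies. Second, the step you label ``a local analysis of the Buchberger edges incident to $m^{\ast}$ \ldots\ produces a contradiction'' is precisely the heart of the matter, and you have not indicated what that analysis is. Knowing only that $m^{\ast}$ fails to divide $m$ while all its Buchberger neighbours do divide $m$ does not by itself yield a contradiction from the edge condition; in \cite{MS} this is circumvented by recognising $\Delta[m]$ as (the planar map associated to) the Buchberger graph of the strongly generic ideal generated by the $m_i$ dividing $m$, and then invoking the same structural results recursively. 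Your outline does not supply an alternative mechanism, so as written it is a plan rather than a proof.
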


It is natural to ask:
  {\it Can this result be extended to strongly generic ideals in a
  polynomial ring in $n$ variables, $S=\kk[x_1,\ldots,x_n]$?}

In order to answer this question, we generalize the Buchberger graph to a
suitable simplicial complex and then prove that this simplicial complex
supports a free resolution.

\begin{Definition}
  Let $I\subseteq\kk[x_1,\ldots,x_n]$ be a monomial ideal. 
  The \textit{Buchberger complex} $\Bu(I)$ of $I$, is the collection of 
  all subsets $F$ of $\MinGen(I)$ such that no $u \in \MinGen(I)$ properly divides $\bmo_F$.
\end{Definition}

The set system $\Bu(I)$ is indeed a simplicial complex. To see this assume
$F\in \Bu(I)$ and $G\subsetneq F$, $G\notin\Bu(I)$. Then $\bmo_G\mid \bmo_F$. But $G\notin \Bu(I)$
and hence there is a minimal monomial generator
$u\in I$ such that $u \mid_p \bmo_G$, which implies $u\mid_p \bmo_F$, a 
contradiction.

\begin{Remark}
   \begin{itemize}
      \item The $1$-skeleton $\Bu(I)^{\langle 1 \rangle}$ of the Buchberger complex 
         $\Bu(I)$ of the monomial ideal $I$ is the Buchberger graph $\BuG(I)$ 
         of $I$. 
      \item If $I$ is a squarefree
         monomial ideal, then its Buchberger complex is the full simplex 
         $2^{\MinGen(I)}$.
   \end{itemize}
\end{Remark}

\begin{Example}
  \label{ex}
  Let $I=(x_1^2,x_2^2,x_3^2,x_1x_3,x_2x_4)\subseteq \kk[x_1,x_2,x_3,x_4]$. 
  It is easy to see that
  $x_1x_3\mid_p x_1^2x_3^2=\lcm(x_1^2,x_3^2)$, 
  therefore $\{x_1^2,x_3^2\}\notin \Bu(I)$. In this
  case the Buchberger complex has two facets (see Figure 1), namely

  $$\Bu(I)=\langle\{x_1^2,x_2^2,x_1x_3,x_2x_4\},
                  \{x_3^2,x_2^2,x_1x_3,x_2x_4\}\rangle.$$

\begin{figure}\label{f1}
  \begin{picture}(0,0)%
     \includegraphics{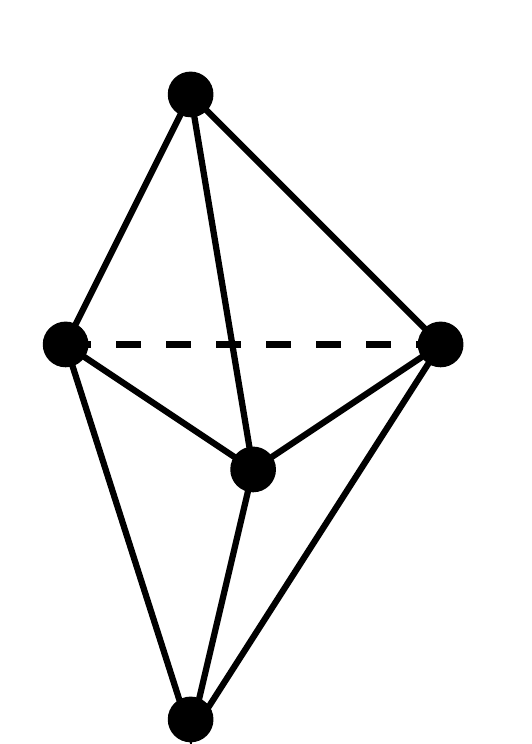}%
  \end{picture}%
  \setlength{\unitlength}{3947sp}%
  \begingroup\makeatletter\ifx\SetFigFont\undefined%
  \gdef\SetFigFont#1#2#3#4#5{%
   \reset@font\fontsize{#1}{#2pt}%
   \fontfamily{#3}\fontseries{#4}\fontshape{#5}%
   \selectfont}%
  \fi\endgroup%
  \begin{picture}(2537,3563)(2086,-3474)
      \put(2000,-1600){\makebox(0,0)[lb]{\smash{{\SetFigFont{12}{14.4}{\rmdefault}{\mddefault}{\updefault}{\color[rgb]{0,0,0}$x_2^2$}%
}}}}
      \put(2650,-400){\makebox(0,0)[lb]{\smash{{\SetFigFont{12}{14.4}{\rmdefault}{\mddefault}{\updefault}{\color[rgb]{0,0,0}$x_1^2$}%
}}}}
      \put(2600,-3400){\makebox(0,0)[lb]{\smash{{\SetFigFont{12}{14.4}{\rmdefault}{\mddefault}{\updefault}{\color[rgb]{0,0,0}$x_3^2$}%
}}}}
      \put(2770,-2200){\makebox(0,0)[lb]{\smash{{\SetFigFont{12}{14.4}{\rmdefault}{\mddefault}{\updefault}{\color[rgb]{0,0,0}$x_1x_3$}%
}}}}
      \put(4400,-1600){\makebox(0,0)[lb]{\smash{{\SetFigFont{12}{14.4}{\rmdefault}{\mddefault}{\updefault}{\color[rgb]{0,0,0}$x_2x_4$}%
}}}}
    \end{picture}%
    \caption{Buchberger Complex of $I = (x_1^2,x_2^2,x_3^2,x_1x_3,x_2x_4)$}
  \end{figure}
\end{Example}

In order to verify the conditions from \ref{BPScriterion} for the Buchberger complex, we
have to introduce some combinatorial constructions and exhibit some of their
properties.

Let $P$ be a finite poset and $\Delta(P)$ its order complex, that is the 
simplicial complex whose simplices are the chains in the poset $P$. Through
the geometric realization of $\Delta(P)$ we then can speak of homotopy type,
homotopy equivalence and contractability of posets.
To a monomial ideal $I$ we associate its \textit{lcm-lattice} $L_I$, which is 
the poset on the least common multiples $\bmo_F$ of subsets 
$F \subseteq \MinGen(I)$ ordered by the divisibility \cite{GPW}. It is easily 
checked that $L_I$ indeed is a lattice with least element 
$1 = \lcm(\emptyset)$.
For a monomial $m \in L_I$ we write $(1,m)$ for the open interval of all 
$m' \in L_I$ such that $1 < m' < m$.

In the next two lemmas we collect basic topological properties of the lcm lattice that will
prove crucial for our first main result.

\begin{Lemma}\label{LCM}
  Let $m\in L_I$. If there is a monomial
  $w \in \MinGen(I)$ such that $w \mid_p m$ then the interval $(1,m)$ is 
  contractible.
\end{Lemma}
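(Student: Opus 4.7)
The plan is to exhibit a closure operator on the poset $(1,m)$ whose fixed-point set has a minimum, making the whole interval contractible.

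First I would fix the promised $w \in \MinGen(I)$ with $w\mid_p m$, and define the map $f\colon (1,m)\to L_I$ by $f(m') = \lcm(m',w)$. The first task is to check that $f$ actually takes values inside $(1,m)$. Since $\lcm(m',w)$ is the lcm of the monomials appearing in the lcm-expression of $m'$ together with $w\in\MinGen(I)$, it lies in $L_I$, and it is strictly larger than $1$ because it is divisible by $w$. For divisibility by $m$, I would write $m = x_1^{m_1}\cdots x_n^{m_n}$, $w = x_1^{w_1}\cdots x_n^{w_n}$, and use the definition of $\mid_p$: $w_i < m_i$ whenever $m_i\neq 0$, and $w_i = 0$ whenever $m_i = 0$. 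Since $m'$ divides $m$, we get $\max(m'_i,w_i)\le m_i$ for every $i$, so $f(m')\mid m$.

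The most delicate point of the well-definedness is showing $f(m')\ne m$ for $m'\in(1,m)$. If $f(m')=m$, then for every $i$ with $m_i>0$ we need $\max(m'_i,w_i)=m_i$; since $w_i<m_i$ strictly, this forces $m'_i=m_i$. Combined with $m'\mid m$ (which gives $m'_i=0$ at the coordinates where $m_i=0$), this yields $m'=m$, contradicting $m'\in(1,m)$. This is the step I would expect to be the main obstacle, because it relies essentially on the non-standard definition of proper division $\mid_p$ rather than ordinary division.

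Next I would verify that $f$ is a closure operator on $(1,m)$: it is order-preserving (lcm with a fixed monomial is monotone), satisfies $m'\le f(m')$ by definition, and is idempotent because $f(f(m'))=\lcm(\lcm(m',w),w)=\lcm(m',w)=f(m')$. By the standard closure-operator lemma for posets, the inclusion of the fixed-point set $f((1,m))$ into $(1,m)$ is a homotopy equivalence on order complexes.

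Finally, the image $f((1,m)) = \{m'\in(1,m) : w\mid m'\}$ contains $w$ itself as a minimum element (any $m'$ in the image satisfies $m'\ge w$, and $w\in(1,m)$ since $1<w<m$ follows from $w\mid_p m$). A poset with a least element has a contractible order complex, being a cone with apex at that least element. Therefore $(1,m)$ is contractible, as desired.
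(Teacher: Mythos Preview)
Your proof is correct and follows essentially the same approach as the paper: define the closure operator $f(m')=\lcm(m',w)$ on $(1,m)$, invoke the closure-operator homotopy lemma, and observe that the image is a cone over $w$. You supply more detail than the paper on why $f$ lands in $(1,m)$---in particular the verification that $f(m')\neq m$ using the strict inequality in the definition of $\mid_p$---which the paper leaves implicit.
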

\begin{proof}
  Let $w\in \MinGen(I)$ be a monomial which properly divides $m$. We consider 
  the map $f: (1,m) \rightarrow (1,m)$, defined by $f(u)=\lcm(u,w)$. Since 
  $w$ properly divides $m$ it follows
  that $f(u) \in (1,m)$ for all $u \in (1,m)$. By construction 
  for $u,u' \in (1,m)$ and $u \leq u'$ it follows that $f(u) \leq f(u')$. 
  Again by definition $f(f(u)) = f(u)$ and hence
  $f$ is a closure operator on the poset $(1,m)$. By \cite[Cor. 10.11]{Hand} this 
  shows that the interval $(1,m)$ 
  and its image under $f$ are homotopy equivalent. Moreover, the image of $(1,m)$ under
  $f$ has $w$ as its unique smallest element.
  Therefore, the order complex of the image of $f$ is a cone over $w$ and 
  hence contractible. Thus $(1,m)$ is contractible itself.
\end{proof}

\begin{Lemma}\label{LCM-1}
  Let $P_I$ be the poset of all monomials $m \in L_I \setminus\{1\}$ such that 
  there is no monomial $w\in \MinGen(I)$ which properly divides $m$. 
  Then $P_I$ is contractible.
\end{Lemma}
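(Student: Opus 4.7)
The plan is to observe that $L_I\setminus\{1\}$ is contractible---its order complex is a cone on the maximum element $\bmo=\lcm(\MinGen(I))$---and then deduce the contractibility of $P_I$ by deleting the elements of the complement $A:=(L_I\setminus\{1\})\setminus P_I$ one at a time from $L_I\setminus\{1\}$ in such a way that each deletion preserves homotopy type. The key topological input I would use is the standard fact that if a vertex $v$ of a simplicial complex has contractible link, then the complex deformation retracts onto the subcomplex of simplices not containing $v$. The key combinatorial input is \ref{LCM}, which gives contractibility of the open intervals $(1,m)$ for each $m\in A$.

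Concretely, I would list the elements of $A$ as $a_1,\ldots,a_N$ following a linear extension of the reverse of the divisibility order restricted to $A$---so that $a_i<a_j$ in $L_I$ forces $i>j$---and set $Q_j:=(L_I\setminus\{1\})\setminus\{a_1,\ldots,a_{j-1}\}$. The link of $a_j$ in $\Delta(Q_j)$ is the join $\Delta((Q_j)_{<a_j})*\Delta((Q_j)_{>a_j})$. By the choice of ordering, no $a_i$ with $i<j$ lies below $a_j$, so $(Q_j)_{<a_j}$ is the full open interval $(1,a_j)$ in $L_I$. Since $a_j\in A$, some $w\in\MinGen(I)$ properly divides $a_j$, so \ref{LCM} applies and $\Delta((1,a_j))$ is contractible. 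A join with a contractible factor is contractible, so the link of $a_j$ in $\Delta(Q_j)$ is contractible, and removing $a_j$ from $\Delta(Q_j)$ therefore leaves the homotopy type unchanged.

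Iterating through all $j$ yields a chain of homotopy equivalences from $\Delta(L_I\setminus\{1\})$ to $\Delta(P_I)$; since the first is contractible, so is the last. The only delicate point is the deletion order: we need the set of elements below $a_j$ in the current poset to remain the full interval $(1,a_j)$ when it is $a_j$'s turn to be removed, so that \ref{LCM} is applicable. Taking a linear extension of the reverse order on $A$---maximal elements of $A$ removed first---handles this cleanly, so I would expect no substantial obstacle beyond making this bookkeeping explicit.
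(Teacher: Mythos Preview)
Your proof is correct and follows essentially the same approach as the paper: both start from the contractibility of $L_I\setminus\{1\}$ (cone on $\lcm\MinGen(I)$) and use \ref{LCM} to show that removing the elements admitting a proper divisor in $\MinGen(I)$ preserves the homotopy type. The only difference is packaging---the paper invokes the Quillen Fiber Lemma in one stroke, whereas you carry out the same deletion explicitly, one vertex at a time via the contractible-link criterion; your top-down ordering on $A$ is exactly what is needed to keep each lower interval intact so that \ref{LCM} applies.
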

\begin{proof}
  By \ref{LCM} we know that for each $m \in L_I$ for which there is an 
  $w \in \MinGen(I)$ that properly divides $m$ the interval $(1,m)$ is contractible.
  Let $M$ be the set of all $m \in L_I$ with this property. Then 
  a simple application of the Quillen Fiber Lemma (see \cite[Thm. 10.11]{Hand} show
  that $L_I \setminus ( M \cup \{1\})$ is homotopy equivalent to $L_I \setminus \{1\}$.
  But $L_I \setminus \{1\}$ has $\lcm \MinGen(I)$ as its unique maximal element and
  hence its geometric realization is a cone. Therefore, it is contractible. But this
  shows that $P_I$ must be contractible.
\end{proof}

For the proof of our first main theorem we need yet another concept from topological 
combinatorics.
We call a subset $B$ of a poset $P$ bounded if there are elements $m$ and $m'$
in $P$ such $m \leq n \leq m'$ for all $n \in B$.
A crosscut $A$ in $P$ is an antichain such that for every chain 
$C\subseteq P$, there exists an element $a\in A$ such that $a$ is comparable 
with all elements in $C$ and every bounded subset $B$ of $A$ has an infimum 
and a supremum. Let $\Gamma(P,A)$ be the collection of subsets of $A$ 
that are bounded. It is easily seen that $\Gamma(P,A)$ is a simplicial complex.
It is called the crosscut complex of $P$ and $A$. The relation between the crosscut 
complex $\Gamma(P,A)$ and the topology of $P$ is given by the homotopy version of
Rota's Crosscut Theorem, which says (see \cite[Thm. 10.8]{Hand}) that for a finite poset 
$P$ and a crosscut $A$ in $P$
the complexes $\Gamma(P,A)$ and $\Delta(P)$ are homotopy equivalent.

\begin{Theorem}
  Let $I$ be a monomial ideal. Then the Buchberger complex $\Bu(I)$ supports a cellular 
  resolution of $I$.
\end{Theorem}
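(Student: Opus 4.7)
By Proposition~\ref{BPScriterion} it suffices to prove that for every monomial $m$ the subcomplex $\Bu(I)[m]$ is either empty or acyclic over $\kk$. My first step is the identification $\Bu(I)[m] = \Bu(I_m)$, where $I_m$ is the monomial ideal generated by $\{u \in \MinGen(I) : u \mid m\}$: a face $F \in \Bu(I)$ with $\bmo_F$ dividing $m$ has all of its vertices in $\MinGen(I_m)$, and any $w \in \MinGen(I)$ which properly divides $\bmo_F$ automatically divides $m$ and hence already belongs to $\MinGen(I_m)$. This reduces the problem to proving that $\Bu(J)$ is contractible for every monomial ideal $J$ with $\MinGen(J) \neq \emptyset$, the empty case being immediate.

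To establish this I would apply the Crosscut Theorem to the poset $P_J$ of Lemma~\ref{LCM-1} with $A := \MinGen(J)$ as crosscut. The set $A$ is an antichain in $P_J$, being precisely the set of atoms of $L_J$: minimal generators are pairwise incomparable under divisibility, so none properly divides another and thus $A \subseteq P_J$. The first crosscut condition is then easy, since the minimum $c_0$ of any chain $C \subseteq P_J$ can be written as $\lcm(S)$ for some $S \subseteq \MinGen(J)$, and any $s \in S$ lies in $A$ and is below every element of $C$. For the second, a subset $B \subseteq A$ with an upper bound in $P_J$ has supremum $\lcm(B)$, and this supremum belongs to $P_J$: otherwise some minimal generator would properly divide $\lcm(B)$, and hence would properly divide every upper bound of $B$, contradicting the existence of an upper bound in $P_J$. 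Matching the definitions, the crosscut complex $\Gamma(P_J, A)$ equals $\Bu(J)$, so the Crosscut Theorem yields the homotopy equivalence $\Bu(J) \simeq \Delta(P_J)$, and Lemma~\ref{LCM-1} says the latter is contractible.

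The main technical obstacle is the bookkeeping in the matching $\Gamma(P_J,A) = \Bu(J)$: one has to argue that the subsets of $A$ bounded in $P_J$ are precisely those $B$ with $\lcm(B) \in P_J$, and that on such subsets both an infimum and a supremum exist. The supremum part is handled by the closure argument above; the infimum part becomes vacuous once one observes that the pairwise incomparability of minimal generators under ordinary divisibility prevents any $B \subseteq A$ with $|B|\geq 2$ from admitting a common lower bound in $P_J$, so only singletons and (where relevant) the empty set need to be considered.
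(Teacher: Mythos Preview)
Your proof is correct and follows the same route as the paper's: identify $\Bu(I)[m]$ with the Buchberger complex of the ideal $J_m$ generated by the divisors of $m$ in $\MinGen(I)$, then show each $\Bu(J)$ is contractible by applying the Crosscut Theorem to the poset $P_J$ of \ref{LCM-1} with crosscut $A=\MinGen(J)$. The only cosmetic differences are that you carry out the reduction step first (the paper does it last) and verify the crosscut axioms by a direct argument on atoms, whereas the paper deduces them from the observation that $P_I$ is a lower order ideal in $L_I\setminus\{1\}$.
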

\begin{proof}
  We will first verify that $\Bu(I)$ is contractible and then show that this
  implies the criterion from \ref{BPScriterion} for a simplicial complex to
  support a free resolution.

  Let $P_I$ be the poset of all monomials $m\in L_I \setminus \{1\}$ such that there is no
  monomial $w\in \MinGen(I)$ which properly divides $m$.
  If $m \in P_I$ and $m' \in L_I$ divides $m$ then
  $m' \in P_I$. Otherwise there is 
  $m'' \in L_I$ which properly divides $m'$ and hence properly divides
  $m$, contradicting $m \in P_I$. Thus $P_I$ is a lower order ideal in $L_I \setminus \{1\}$.
  As $L_I$ is a lattice this implies that any subset of $P_I$ which is bounded from below 
  (resp. above) has an infimum (resp. supremum) in $P_I$.

  Let $A = \MinGen(I)$. Then $A \subseteq P_I$ and since $A$ is a generating
  set of the monomial ideal $I$, for every chain in $P_I$ there is an element
  from $A$ comparable with all elements from the
  chain. Since we know that any bounded subset of $P_I$ has an infimum and supremum in
  $P_I$ is follows that $A$ is a crosscut in $P_I$. Moreover, it follows that the
  crosscut complex $\Gamma(P_I,A)$ is the Buchberger complex $\Bu(I)$.
  Therefore, by \ref{LCM-1} and the Crosscut Theorem \cite[Thm. 10.8]{Hand} it follows that 
  $\Bu(I)$ is contractible.

   Set $\Delta = \Bu(I)$ and let $m$ be some monomial such that $\Delta[m]$
   is non-empty. Then $\Delta[m]$ consists of all
   subsets $F \in \Bu(I)$ of $\MinGen(I)$ such that $\bmo_F$ divides $m$.
   Thus $F \in \Delta[m]$ if and only if $\bmo_F$ divides $m$ and there is no
   $u \in \MinGen(I)$ that properly divides
   $\bmo_F$. Let $J_m$ be the monomial ideal generated by all
   $u \in \MinGen(I)$ such that $u$ divides $m$.
   Then $F \in \Delta[m]$ if and only if $F \subseteq \MinGen(J_m)$ and 
   there is no $u \in \MinGen(J_m)$
   that properly divides $\bmo_F$. This implies that $F \in \Delta[m]$ 
   if and only if $F \in
   \Bu(J_m)$ and hence $\Delta[m] = \Bu(J_m)$. But then by the first 
   part of the proof it follows that
   $\Delta[m]$ is contractible and hence acyclic.

   Thus we have verified the conditions from \ref{BPScriterion} for 
   $\Delta = \Bu(I)$ and hence
   $\Bu(I)$ supports a cellular resolution of $I$.
\end{proof}

We call the complex $\mathcal{F}_{\Bu(I)}$ supported on the Buchberger 
complex $\Bu(I)$ the \textit{Buchberger resolution} of the monomial ideal $I$.
Next, we characterize when the Buchberger resolution is minimal.

\begin{Proposition}\label{minimal}
    Let $I$ be a monomial ideal and $\Bu(I)$ its Buchberger complex. Then the Buchberger resolution is a
    minimal free resolution of $I$ if and only if whenever $F,G\subset \MinGen(I)$ are such that
    $\bmo_F=\bmo_G$, then there is a monomial $u\in \MinGen(I)$ such that $u$
    properly divides $\bmo_F=\bmo_G$.
\end{Proposition}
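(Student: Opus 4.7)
The plan is to reduce everything to Proposition \ref{BPSminimal}, which reformulates minimality of a cellular resolution in terms of strict lcm-growth along comparable faces. Thus my goal is to translate the hypothesis of the proposition into the statement: for every pair $F \subsetneq G$ of faces of $\Bu(I)$, one has $\bmo_F \neq \bmo_G$.

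For the direction ``$\Leftarrow$'', suppose the divisibility condition holds. Let $F \subsetneq G$ be comparable faces of $\Bu(I)$ and assume for contradiction that $\bmo_F = \bmo_G$. Since $F$ and $G$ are distinct subsets of $\MinGen(I)$ with the same lcm, the hypothesis produces a minimal generator $u \in \MinGen(I)$ with $u \mid_p \bmo_G$. But this is exactly the obstruction ruling out $G \in \Bu(I)$, contradiction. Hence Proposition \ref{BPSminimal} gives minimality.

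For the direction ``$\Rightarrow$'', assume the Buchberger resolution is minimal and let $F, G \subset \MinGen(I)$ be distinct with $\bmo_F = \bmo_G$. Form $H := F \cup G$; since $F \neq G$, at least one of the inclusions $F \subseteq H$, $G \subseteq H$ is strict, say $F \subsetneq H$, and we have $\bmo_H = \lcm(\bmo_F,\bmo_G) = \bmo_F$. Now I consider two cases on whether $H \in \Bu(I)$. If $H \in \Bu(I)$, then since $\Bu(I)$ is a simplicial complex we also have $F \in \Bu(I)$; but then $F \subsetneq H$ are comparable faces of $\Bu(I)$ with equal multidegree, contradicting minimality via Proposition \ref{BPSminimal}. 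Hence $H \notin \Bu(I)$, which by definition of the Buchberger complex means there exists $u \in \MinGen(I)$ with $u \mid_p \bmo_H = \bmo_F$, as required.

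I expect no real obstacle here: the content is entirely in the translation between ``non-face of $\Bu(I)$'' and ``admits a proper divisor in $\MinGen(I)$'', plus the elementary observation that distinct subsets with a common lcm can always be merged into a strictly larger subset having that same lcm. The only small subtlety is handling the case where $F$ and $G$ are incomparable, which is why I pass to the union $H = F \cup G$ to produce a comparable pair to which Proposition \ref{BPSminimal} can be applied.
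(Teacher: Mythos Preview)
Your proof is correct and follows the same strategy as the paper: both directions are reduced to \ref{BPSminimal}, and in the ``$\Rightarrow$'' direction one manufactures a strict inclusion of faces of $\Bu(I)$ sharing the same lcm to contradict minimality. The only difference is cosmetic: the paper first restricts (implicitly) to the case $F,G \in \Bu(I)$, then in the incomparable case adjoins a single element $m_\alpha \in F\setminus G$ to $G$ to produce the offending pair $G \subsetneq G\cup\{m_\alpha\}$, whereas you pass directly to $H = F\cup G$ and run a clean dichotomy on whether $H\in\Bu(I)$. Your version is slightly more streamlined and avoids the paper's somewhat roundabout detour through the edges $\{m_\alpha,m_\beta\}$, but the underlying idea is the same.
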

\begin{proof}
    It is clear that if whenever $F,G\subset \MinGen(I)$ are such that $\bmo_F=\bmo_G$, there is a
    minimal monomial generator $u\in \MinGen(I)$ such that $u$ properly divides $\bmo_F=\bmo_G$
    it follows that $F,G \not\in \Bu(I)$.
    Then by \ref{BPSminimal} the Buchberger resolution is a minimal free resolution of $I$.

    Assume that the Buchberger resolution is a minimal free resolution of $I$. Let
    $F,G \in \Bu(I)$ such that $\bmo_F = \bmo_G$.
    If $F\subseteq G$ or $G \subseteq F$ then \ref{BPSminimal} implies that $F = G$ and we are done.
    Thus we can assume that $F$ and $G$ are incomparable with respect to inclusion.
    Thus we can choose $m_\alpha\in F\setminus G$ and $m_\beta \in G \setminus F$. 
    Then $\{m_\alpha,m_{\beta}\}\in \Bu(I)$ for all
    $m_{\beta}\in G$ since otherwise there exists $u\in \mathcal{G}(I)$ with $u\mid_p
    \lcm(m_{\alpha},m_{\beta})\mid \bmo_F$ and $F\notin \Bu(I)$, a contradiction. Therefore,
    $\{m_{\alpha}\}\cup G\in \Bu(I)$. Since the resolution is minimal and $\bmo_G=\bmo_{\{m_{\alpha}\}\cup G}$, one has $G$ and $\{m_{\alpha}\}\cup G$ are not
    faces in $\Bu(I)$, a contradiction.
\end{proof}

Now we return to the monomial ideal from \ref{ex} and explicitly construct the
Buchberger resolution. Moreover, one can easy check or use \ref{minimal} in order to prove
that the resolution is minimal.

\begin{Example}
  Let $I=(x_1^2,x_2^2,x_3^2,x_1x_3,x_2x_4)\subseteq \kk[x_1,x_2,x_3,x_4]$. Then counting faces in Figure 1 provides 
  \[\begin{array}{ccccccccccccc}
     0&\rightarrow&S^2&\rightarrow&S^7&\rightarrow&S^9&\rightarrow&S^5&\rightarrow&I&\rightarrow&0
  \end{array}\]
  as its minimal free resolution over $S$. Moreover, the differentials
  are easily written out by homogenizing the simplicial differential.
\end{Example}

For a monomial $m \in L_I$ we say that $m$ is a Buchberger degree if there is 
no $u \in \MinGen(I)$ that properly divides $m$. If $m$ is a Buchberger 
degree for $I$ then we denote by $B_m(I)$ the poset of all subsets $A$ of 
$[n] := \{1,\ldots, n\}$ such that for some $m' \in (1,m)$ the set $A$ is the 
set of indices $i \in [n]$ such that the exponents of $x_i$ in $m'$ and $m$ 
coincide.

\begin{Corollary}
  \label{co:bool}
  Let $I$ be a monomial ideal and $m \in L_I$. Then either 
  $\beta_{i,m} = 0$ if $m$ is not a Buchberger
  degree or $\beta_{i,m} = \dim_\kk \widetilde{H}_{i-1}(B_m(I);\kk)$.
\end{Corollary}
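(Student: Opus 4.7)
The plan is to combine the standard formula of Gasharov--Peeva--Welker expressing multigraded Betti numbers through the lcm-lattice,
\[
   \beta_{i,m}(I) = \dim_{\kk} \widetilde{H}_{i-1}\bigl(\Delta(1,m)_{L_I}; \kk\bigr),
\]
with the contractibility results already established in this section. If $m$ is not a Buchberger degree then by definition there is some $w\in\MinGen(I)$ with $w\mid_p m$, and then \ref{LCM} forces the open interval $(1,m)$ to be contractible. All reduced homology groups vanish and $\beta_{i,m}(I)=0$, which settles the first clause of the corollary.

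Assume now that $m$ is a Buchberger degree. I would deduce $\beta_{i,m}(I)=\dim_{\kk}\widetilde H_{i-1}(B_m(I);\kk)$ by establishing a homotopy equivalence between $\Delta(1,m)$ and the order complex of $B_m(I)$. The natural map is the ``agreement set'' map
\[
   \phi \: (1,m) \longrightarrow B_m(I), \qquad \phi(m') := \{\, i \in [n] : (m')_i = m_i \,\},
\]
where $(m')_i$ denotes the $x_i$-exponent of $m'$. Well-definedness is immediate from the definition of $B_m(I)$, and order-preservation follows from the sandwich $(m_1)_i \le (m_2)_i \le m_i$ whenever $m_1 \mid m_2 \mid m$, which forces $(m_2)_i = m_i$ as soon as $(m_1)_i = m_i$.

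The homotopy equivalence will then come from the Quillen Fiber Lemma. For each $A \in B_m(I)$ the fiber
\[
   \phi^{-1}\bigl((B_m(I))_{\le A}\bigr) = \{\, m' \in (1,m) : (m')_i < m_i \text{ for every } i \notin A \,\}
\]
is nonempty, since $A$ is witnessed by some element of $(1,m)$, and it is closed under $\lcm$ in $L_I$ because taking joins only enlarges exponents while keeping them bounded above by those of $m$; the strict inequality on the coordinates $i\notin A$ is preserved under $\lcm$. Hence the join $\mu_A$ of all elements of the fiber lies in the fiber, so the fiber has a maximum and its order complex is a cone on $\mu_A$, hence contractible. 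Note $\mu_A\neq m$ because $A\neq[n]$ for any $A\in B_m(I)$ (every $m'\in(1,m)$ strictly drops in some coordinate of $m$), so there is some $i\notin A$ with $(\mu_A)_i<m_i$. The Quillen Fiber Lemma now yields $\Delta(1,m)\simeq \Delta(B_m(I))$, and inserting this into the Gasharov--Peeva--Welker formula concludes the proof.

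The main obstacle is conceptual rather than computational: one must spot that $B_m(I)$ parametrises $(1,m)$ through the agreement-set map $\phi$, and that the lattice structure of $L_I$ automatically makes each fiber join-closed. Once these two observations are in place, every remaining step -- in particular the verification that $\mu_A$ does not escape from $(1,m)$ -- is forced by elementary exponent bookkeeping.
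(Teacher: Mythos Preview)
Your proof is correct and follows essentially the same approach as the paper: both invoke the Gasharov--Peeva--Welker formula $\beta_{i,m}=\dim_\kk\widetilde H_{i-1}((1,m);\kk)$ and, in the Buchberger-degree case, apply the Quillen Fiber Lemma to the agreement-set map $(1,m)\to B_m(I)$, showing each lower fiber has a unique maximum (the lcm of its members) and is therefore contractible. For the non-Buchberger-degree case you cite \ref{LCM} directly, whereas the paper cites \ref{minimal}; your reference is in fact the more transparent one, since contractibility of $(1,m)$ is exactly what forces $\beta_{i,m}=0$.
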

\begin{proof}
  By \ref{minimal} we know that $\beta_{i,m} = 0$ if $m$ is not a Buchberger 
  degree. Assume $m$ is a Buchberger degree. 
  Consider the map $f: (1,m) \rightarrow B_m(I)$
  that sends a monomial $m'$ to the set of indices $i$ for which the exponent 
  of $x_i$ in $m$ and $m'$ coincide. For any $A \in B_m(I)$ the lower fiber 
  $f^{-1} (B_m(I)_{\leq A})$
  consists of all $m' \in (1,m)$ for which the set of indices such that the 
  exponents of $x_i$ in
  $m$ and $m'$ coincide is a subset of $A$. Since $A \neq [n]$ it follows 
  that the lcm of all
  elements of $f^{-1}(B_m(I)_{\leq A})$ is an element of $(1,m)$. Hence the 
  fiber
  $f^{-1}(B_m(I)_{\leq A})$ has a unique maximal element. Thus its order 
  complex is a cone and hence
  the fiber is contractible. Then the Quillen fiber lemma says that $(1,m)$ 
  and $B_m(I)$ are homotopy equivalent. In particular, they have the same 
  reduced homology. Since $\beta_{i,m} =
  \dim_{\kk}\widetilde{H}_{i-1}((1,m);\kk)$ the result follows.
\end{proof}

Note that if $m$ is a monomial on $n$ variables $x_1,\ldots, x_n$ then 
$B_m(I)$ is a subset of the Boolean lattice $2^{[n]}$ on $[n]$. 
Indeed $B_m(I) \cup \{ \emptyset,[n] \}$ is a lattice whose join
operation coincides with the union of sets. It is atomic and its atoms are 
the images of the generators of $I$ dividing $m$. Now \ref{co:bool} shows 
that for a monomial $m$ on $n$ variables the Betti number $\beta_{i,m}$ is 
bounded from above by the maximal rank of the $(i-1)$\textsuperscript{st}
homology of an atomic join sublattice of the Boolean lattice on $n$ elements. 
It is also worthwhile to study what can be said about the poset of all $m$
that are Buchberger degrees. Does this poset have an interesting structure ? 
By the above corollary this question relates to the even more challenging
poset of all $m$ for which $\beta_{i,m}$ is nonzero for some $i$. A study of 
this poset was initiated in \cite[Sec. 8]{TV}, where first results 
can be found.

\section{Minimality and relation to the Scarf complex}
\label{sec2}

In this section we are interested in determining the connections between 
the Buchberger complex and the Scarf complex and the related question of when
the Buchberger complex defines a minimal free resolution. 
As before, let $I$ be a monomial 
ideal with minimal monomial generating set $\mathcal{G}(I)=\{m_1,\ldots,m_r\}$.
We recall the definition of the Scarf complex (see \cite{BPS}).

\begin{Definition}
  The \textit{Scarf complex} $\Sc_I$ of $I$ is the collection of all subsets 
  of $\MinGen(I)$ whose least common multiple is unique:
  \[
     \Sc_I=\{\sigma\subseteq\{1,\ldots,r\}: \bmo_\sigma = 
            \bmo_\tau \Rightarrow (\sigma =\tau)\}.
  \]
  We call the complex $\Fc_{\Sc(I)}$ of free $S$-modules supported on the Scarf complex $\Sc(I)$ 
  the \textit{algebraic Scarf complex} of the monomial ideal $I$.
\end{Definition}

Note that in general $\Fc_{\Sc(I)}$ is 
not a free resolution of $I$ (see below for
details).

The following remark clarifies the connection between the Scarf complex, the 
Buchberger graph and the Buchberger complex.

\begin{Remark} \label{Scarf}
  \begin{enumerate}
    \item It is easily seen that 
       $\Sc_I^{\langle 1\rangle}\subseteq\BuG(I)$, but the converse does not
       hold in general.
    \item One has that $\Sc_I\subseteq \Bu(I)$. Indeed, let us assume by 
       contradiction that $F\in\Sc_I$ and $F\notin \Bu(I)$. Then there must 
       be a monomial
       $u\in \MinGen(I)$ such that $u \mid_p \bmo_F$. But, in this case, 
       $\lcm(u,\bmo_F)=\bmo_F$, that is
       $F\notin \Sc_I$, a contradiction.
  \end{enumerate}
\end{Remark}

We can characterize the minimality of the Buchberger resolution in terms of 
the Scarf  complex.

\begin{Proposition}\label{Buch Scarf}
  Let $I$ be a monomial ideal in $S$. The following are equivalent:
  \begin{itemize}
	\item[(a)] $\Fc_{\Bu(I)}$ is a minimal resolution of $I$.
	\item[(b)] The Scarf complex $\Sc(I)$ and the Buchberger 
          complex $\Bu(I)$ coincide.
  \end{itemize}
\end{Proposition}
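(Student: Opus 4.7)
The plan is to derive the equivalence directly from Proposition \ref{minimal} together with the always-valid containment $\Sc(I) \subseteq \Bu(I)$ from Remark \ref{Scarf}(2). Essentially, Proposition \ref{minimal} already packages exactly the statement that minimality is equivalent to the condition ``every face of $\Bu(I)$ has a unique $\bmo_F$-value among subsets of $\MinGen(I)$,'' which is the defining property of the Scarf complex.

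For the implication (b) $\Rightarrow$ (a), I would take any pair of comparable faces $F \subsetneq G$ in $\Bu(I) = \Sc(I)$. Because both faces lie in the Scarf complex and are distinct, the defining uniqueness property of $\Sc(I)$ forces $\bmo_F \neq \bmo_G$. Applying Proposition \ref{BPSminimal} then immediately yields that $\Fc_{\Bu(I)}$ is a minimal free resolution.

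For the converse (a) $\Rightarrow$ (b), since the inclusion $\Sc(I) \subseteq \Bu(I)$ is already provided by Remark \ref{Scarf}(2), I only need to prove $\Bu(I) \subseteq \Sc(I)$. Suppose for contradiction that there is some $F \in \Bu(I)$ with $F \notin \Sc(I)$. By the defining property of the Scarf complex there exists $G \neq F$ with $\bmo_F = \bmo_G$. Applying Proposition \ref{minimal} to this distinct pair produces a monomial $u \in \MinGen(I)$ that properly divides $\bmo_F$, which contradicts $F \in \Bu(I)$.

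No serious obstacle is expected; the proof is essentially a bookkeeping exercise assembling Remark \ref{Scarf}(2), Proposition \ref{minimal}, and Proposition \ref{BPSminimal}. The only subtlety is the implicit hypothesis $F \neq G$ in the statement of Proposition \ref{minimal}, so the contradiction argument in the second direction must explicitly invoke distinctness of $F$ and $G$ when pulling out the properly-dividing generator $u$.
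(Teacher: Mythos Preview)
Your proposal is correct and follows essentially the same route as the paper: for (a)$\Rightarrow$(b) both you and the paper take $F\in\Bu(I)\setminus\Sc(I)$, extract a distinct $G$ with $\bmo_F=\bmo_G$, and invoke \ref{minimal} to obtain a proper divisor contradicting $F\in\Bu(I)$; for (b)$\Rightarrow$(a) the paper simply observes that uniqueness of labels on $\Sc(I)$ forces minimality, which is exactly your appeal to \ref{BPSminimal}. Your explicit remark about the implicit hypothesis $F\neq G$ in \ref{minimal} is well taken.
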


\begin{proof}
  \noindent {\sf (a)$\Rightarrow$(b):}
  Let $\Fc_{\Bu(I)}$ be a minimal free resolution. Assume that there is a 
  face $F\in \Bu(I)$ which is not in the Scarf complex $\Sc(I)$. 
  The latter implies that there is $G\subseteq
  \MinGen(I)$ such that $\bmo_F=\bmo_G$. By the minimality of the resolution, 
  \ref{minimal}, we get that there is a monomial $u\in \MinGen(I)$ which 
  properly divides $\bmo_F$. But this is a contradiction with
  $F\in \Bu(I)$.

  \noindent {\sf (b)$\Rightarrow$(a):} If $\Sc_I=\Bu(I)$, it is clear that 
  the least common multiple of the monomials from each face of $\Bu(I)$ 
  is unique. The statement follows.
\end{proof}

Since it is known from \cite{MS} that the Scarf complex of a monomial ideal $I$
defines a free resolution of $I$ if and only if it defines a minimal free resolution
of $I$ the following corollary is now straightforward.

\begin{Corollary}\label{Buch Scarf1}
  Let $I$ be a monomial ideal in $S$. If $\Fc_{\Bu(I)}$ is a minimal resolution of $I$, then $\Fc_{\Sc(I)}$ is a minimal resolution of $I$.
\end{Corollary}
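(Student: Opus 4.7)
The plan is a short chain of implications using the equivalence just proved in Proposition~\ref{Buch Scarf} together with the cited dichotomy from \cite{MS} for the Scarf complex.

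First, I would apply the implication (a)$\Rightarrow$(b) of Proposition~\ref{Buch Scarf}: since $\Fc_{\Bu(I)}$ is assumed to be a minimal free resolution of $I$, the two simplicial complexes $\Sc(I)$ and $\Bu(I)$ coincide. Consequently the two algebraic complexes $\Fc_{\Sc(I)}$ and $\Fc_{\Bu(I)}$ of free $S$-modules are literally the same, since they are built from the same underlying labelled simplicial complex by homogenizing the simplicial differential.

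From this identification, $\Fc_{\Sc(I)} = \Fc_{\Bu(I)}$ is a free resolution of $I$. To deduce that it is in fact a \emph{minimal} free resolution, I would invoke the result from \cite{MS} mentioned in the paragraph preceding the corollary: whenever the Scarf complex of a monomial ideal supports a free resolution, that resolution is automatically the minimal one. Thus $\Fc_{\Sc(I)}$ is a minimal free resolution of $I$, as desired.

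There is essentially no obstacle here — the corollary is a direct concatenation of Proposition~\ref{Buch Scarf} and the cited fact about the Scarf complex. The only thing worth double-checking is that the identification of the two chain complexes really is tautological once one knows $\Sc(I)=\Bu(I)$, which follows because the face labels $\bmo_F$ and the homogenized differentials depend only on the labelled simplicial complex, not on how it was constructed.
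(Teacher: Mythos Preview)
Your proof is correct and follows the same route as the paper, which likewise deduces the corollary from \ref{Buch Scarf} together with the cited dichotomy from \cite{MS}. One small observation: the appeal to \cite{MS} in your last step is actually superfluous, since once $\Sc(I)=\Bu(I)$ the complex $\Fc_{\Sc(I)}=\Fc_{\Bu(I)}$ is already minimal by hypothesis.
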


The following example shows that the converse does not hold in general (see \cite[Thm. 5.3]{PV} for more details on when the Scarf complex
supports a minimal free resolution).

\begin{Example} Let $I=(xa,yb,zc,xyz)$ be a monomial ideal in the polynomial ring $k[x,y,z,a,b,c]$. Since $I$ is a squarefree monomial ideal, $\Bu(I)=\langle\{xa,yb,zc,xyz\}\rangle$ so $\Fc_{\Bu(I)}$ is the Taylor resolution, while the Scarf complex is $$\Sc(I)=\langle\{xa,yb,xyz\},\{xa,zc,xyz\},\{yb,zc,xyz\}\rangle$$ since $\lcm(xa,yb,zc,xyz)=\lcm(xa,yb,zc)$. One may check that $\Fc_{\Sc(I)}$ is a resolution, therefore is minimal. 
\end{Example}

A class of monomial ideals for which the algebraic Scarf complex is a
minimal resolution is that of generic monomial ideals (see \cite{BPS}[Thm. 3.2] and 
\cite[Thm. 6.13]{MS}).

\begin{Definition}
   A monomial ideal $I$ is \textit{generic} if whenever two distinct 
   minimal generators $m, m' \in \MinGen(I)$ have the same positive 
   (nonzero) degree in some variable, a third
   generator $m'' \in \MinGen(I)$ strictly divides their least common 
   multiple $\lcm( m, m')$.
\end{Definition}

One may note that every strongly generic ideal is generic. This implies 
that the algebraic Scarf complex is a minimal free resolution for strongly 
generic ideals.

Next we derive a slight generalization of results from \cite[Thm. 6.26]{MS}.
For its formulation we write $\mathfrak{m}$ for the ideal $(x_1,\ldots, x_n)$
in $S$ and $\mathfrak{m}^{\mathbf{u}+1}$ for the ideal $(x_1^{u_1+1},
\ldots, x_n^{u_n+1})$, where $\mathbf{u} = (u_1,\ldots,u_n)$ is an $n$-tuple of
nonnegative integers.

\begin{Proposition}\label{I-bar} 
  Fix a vector $\mathbf{u} = (u_1,\ldots, u_n)$ of nonnegative integers and
  an ideal $I\subseteq \kk[x_1,\ldots,x_n]$ 
  generated by monomials dividing $\mathbf{x}^{\mathbf{u}}$. Set 
  $\bar{I} = I + \frak{m}^{\mathbf{u+1}}M$, where $M$ is a monomial with 
  support in $\kk[x_{n+1},\ldots,x_m]$, $m>n$. If $I$ is generic, then 
  $\Fc_{\Bu(\bar{I})}$ is a minimal free resolution of $\bar{I}$. Moreover, $\Fc_{\Sc(\bar{I})}$ is a minimal free resolution of 
  $\bar{I}$.
\end{Proposition}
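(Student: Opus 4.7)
The plan is to reduce to the case $M = 1$ via a multiplication-by-$M$ bijection on minimal generators, exploiting that $I + \mathfrak{m}^{\mathbf{u}+1}$ is itself generic whenever $I$ is. Along the way I need the auxiliary claim that \emph{for any generic ideal $J$ one has $\Bu(J) = \Sc(J)$}. The inclusion $\Sc(J) \subseteq \Bu(J)$ is \ref{Scarf}(2). For the reverse, take $F \in \Bu(J)$ and assume for contradiction that there is $G \neq F$ with $\bmo_G = \bmo_F$; pick $m$ in the symmetric difference $F \triangle G$, say $m \in G \setminus F$. If some $f \in F$ shares a positive $j$-degree with $m$, then by genericity there is a generator $m''$ with $m'' \mid_p \lcm(m,f)$, and since $\mid_p$ composes with ordinary divisibility, $m'' \mid_p \bmo_F$, contradicting $F \in \Bu(J)$. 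Otherwise, for every $j \in \supp(m)$ the element of $F$ attaining $\bmo_F$'s $j$th coordinate must strictly exceed $m$'s $j$th coordinate (equality would be a shared positive degree), so $m \mid_p \bmo_F$ directly, again a contradiction.

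Next I will verify that $\bar{I}_{\mathrm{old}} := I + \mathfrak{m}^{\mathbf{u}+1}$ is generic in $\kk[x_1,\ldots,x_n]$. Pairs drawn from $\MinGen(I)$ are generic by hypothesis; a generator $m \in \MinGen(I)$ and a pure power $x_i^{u_i+1}$ share no positive degree (in variable $i$, $m$ has degree at most $u_i < u_i+1$, and in any other variable the pure power has degree zero); two distinct pure powers have disjoint supports. The auxiliary claim then yields $\Sc(\bar{I}_{\mathrm{old}}) = \Bu(\bar{I}_{\mathrm{old}})$. I then transport this identity along the obvious bijection $\phi: \MinGen(\bar{I}_{\mathrm{old}}) \to \MinGen(\bar{I})$ defined by $\phi(x_i^{u_i+1}) = x_i^{u_i+1}M$ and $\phi(m) = m$ for $m \in \MinGen(I)$. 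Writing $F = F' \cup \{x_i^{u_i+1} : i \in S\}$ with $F' \subseteq \MinGen(I)$, one has $\bmo_{\phi(F)} = \bmo_F$ if $S = \emptyset$ and $\bmo_{\phi(F)} = \bmo_F \cdot M$ otherwise. Moreover, in either ideal the active set $S$ is recovered from the coordinates of the multidegree equal to $u_i+1$. Thus $\phi$ preserves lcm-coincidences, so it induces a bijection between $\Sc(\bar{I}_{\mathrm{old}})$ and $\Sc(\bar{I})$. For the Buchberger complexes, the key observation is that when $M \neq 1$ no generator $x_j^{u_j+1}M$ properly divides any multidegree of $\bar{I}$ (it agrees with the multidegree on the nonempty support of $M$), and in $\bar{I}_{\mathrm{old}}$ no pure power $x_j^{u_j+1}$ properly divides any multidegree of $\bar{I}_{\mathrm{old}}$ either (its only nonzero coordinate, $u_j+1$, is never strictly exceeded). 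Hence on both sides the condition "no minimal generator properly divides the multidegree" reduces to the identical condition "no generator of $I$ properly divides the $\kk[x_1,\ldots,x_n]$-part of the multidegree", so $\phi$ carries $\Bu(\bar{I}_{\mathrm{old}})$ to $\Bu(\bar{I})$.

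Combining, $\Bu(\bar{I}) = \Sc(\bar{I})$, so by \ref{Buch Scarf} the Buchberger resolution $\Fc_{\Bu(\bar{I})}$ is the minimal free resolution of $\bar{I}$; the algebraic Scarf complex $\Fc_{\Sc(\bar{I})}$ is then also a minimal free resolution by \ref{Buch Scarf1}. The main obstacle is the transport step: one must carefully verify, in both directions, that proper divisibility in $\bar{I}$ precisely mirrors proper divisibility in $\bar{I}_{\mathrm{old}}$, and that the extra factor $M$ does not introduce any new proper divisors. The auxiliary Step 1 is conceptually clean but requires a careful split of cases on whether the chosen $m \in F \triangle G$ shares a positive degree with some element on the opposing side.
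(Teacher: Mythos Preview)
Your proof is correct, but it follows a genuinely different route from the paper's. The paper argues directly for $\bar I$: invoking \ref{Buch Scarf} and \ref{Buch Scarf2}, it takes $F\notin\Sc(\bar I)$, passes to a maximal $F$ with the given label, picks a redundant $m'\in F$, and then splits on whether $m'$ shares a positive degree with some other $m''\in F$; if so, both $m',m''$ must lie in $\MinGen(I)$ (a shared positive degree is impossible among the generators $x_i^{u_i+1}M$), and genericity of $I$ supplies the proper divisor. This is essentially the implication ``(a)$\Rightarrow$(g)'' of \cite[Thm.~6.26]{MS} rewritten.

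Your argument is more modular. You first isolate the clean statement ``$J$ generic $\Rightarrow \Bu(J)=\Sc(J)$'' (which is implicit in the paper via \ref{Buch Scarf} together with \cite[Thm.~6.13]{MS}, but which you prove from scratch), then make the pleasant observation that $I+\mathfrak m^{\mathbf u+1}$ is itself generic---the added pure powers never share a positive degree with anything---and finally transport $\Bu=\Sc$ along the bijection $\phi$ that multiplies the pure powers by $M$. What your approach buys is a reusable lemma and an explanation of \emph{why} the result holds: $\bar I$ is, up to the harmless factor $M$, already a generic ideal. What the paper's approach buys is brevity and a direct citation to \cite{MS}. One small point worth making explicit in your write-up: if some pure power $x_i^a$ already lies in $\MinGen(I)$ then $x_i^{u_i+1}$ (resp.\ $x_i^{u_i+1}M$) is not a minimal generator; your bijection $\phi$ is still well defined between the actual minimal generating sets, but you should say so.
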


In order to prove this result, we need the following lemma:

\begin{Lemma}\label{Buch Scarf2}
  Let $I\subseteq S=\kk[x_1,\ldots,x_n]$ be a monomial ideal. 
  The following are equivalent:
  \begin{itemize}
    \item[(a)] $\Sc(I)=\Bu(I)$;
    \item[(b)] If $F\notin\Sc(I)$ there is a monomial $w\in I$ which properly divides $\bmo_F$.
\end{itemize}
\end{Lemma}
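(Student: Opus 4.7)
The plan is to exploit Remark~\ref{Scarf}(2), which already gives the inclusion $\Sc(I)\subseteq \Bu(I)$ for free, so only the reverse inclusion is at stake when comparing (a) and (b). The direction (a)$\Rightarrow$(b) should be essentially immediate from the definition of $\Bu(I)$: if $F\notin\Sc(I)=\Bu(I)$, then by definition of the Buchberger complex there must exist some $u\in\MinGen(I)$ with $u\mid_p\bmo_F$, and taking $w:=u\in I$ gives the desired witness.

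For the nontrivial direction (b)$\Rightarrow$(a), I would argue by contradiction. Assume (b) holds and suppose there is $F\in\Bu(I)\setminus\Sc(I)$. By (b) we obtain a monomial $w\in I$ that properly divides $\bmo_F$. Since $w\in I$, there exists a minimal monomial generator $u\in\MinGen(I)$ with $u\mid w$. The goal is to promote the proper divisibility $w\mid_p \bmo_F$ to $u\mid_p \bmo_F$; this will contradict $F\in\Bu(I)$ and finish the proof.

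The main step is therefore to check that proper divisibility is preserved under passing from $w$ to a divisor $u$ of $w$. Unpacking the definition of the order $<$ on $\NN^n$: since $w\mid_p\bmo_F$, for every coordinate $i$ with $(\bmo_F)_i>0$ we have $w_i<(\bmo_F)_i$, and for every $i$ with $(\bmo_F)_i=0$ we have $w_i=0$. Since $u\mid w$ gives $u_i\le w_i$ coordinatewise, the same inequalities transfer verbatim: $u_i\le w_i<(\bmo_F)_i$ on the support of $\bmo_F$, and $u_i\le w_i=0$ off the support, forcing equality to zero. Hence $u\mid_p\bmo_F$, contradicting $F\in\Bu(I)$.

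The only subtlety worth flagging is this last monotonicity of $\mid_p$ under divisors; everything else is just bookkeeping between the two definitions. This is why the lemma is genuinely a reformulation: the point is precisely that the existence of a properly dividing monomial in $I$ is equivalent to the existence of one in $\MinGen(I)$, which is exactly the gap between the condition defining $\Sc(I)$ (equality of lcms) and the condition defining $\Bu(I)$ (proper divisibility by a generator).
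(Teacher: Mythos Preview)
Your proof is correct and follows exactly the same route as the paper's own argument: both directions use Remark~\ref{Scarf}(2) for the free inclusion, and in (b)$\Rightarrow$(a) both pass from a monomial $w\in I$ with $w\mid_p \bmo_F$ to a minimal generator with the same property to contradict $F\in\Bu(I)$. The only difference is that you spell out the monotonicity of proper divisibility under passing to a divisor, whereas the paper simply asserts ``in particular, there is a monomial $w'\in\MinGen(I)$ which properly divides $\bmo_F$'' without further comment.
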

\begin{proof} {\sf (a) $\Rightarrow$ (b):}\, Let us assume that $\Sc(I)=\Bu(I)$ and let $F\notin\Sc(I)$, that is $F\notin \Bu(I)$. Therefore, there is a monomial $w\in\MinGen(I)$ which properly divides $\bmo_F$.

\noindent {\sf (b)$\Rightarrow$(a):}\, By using \ref{Scarf}(2), we only have to prove that $\Bu(I)\subseteq\Sc(I)$. Let us assume by contradiction that there is $F\in \Bu(I)$ such that $F\notin\Sc(I)$. By our assumption, since $F\notin\Sc_I$, there is a monomial $w\in I$ which properly divides $\bmo_F$. In particular, there is a monomial $w'\in \MinGen(I)$ which properly divides $\bmo_F$ which implies $F\notin \Bu(I)$, a contradiction.
\end{proof}

\begin{proof}[Proof of \ref{I-bar}]
  By \ref{Buch Scarf} and \ref{Buch Scarf2}, we only have to show that if $F\notin\Sc(\bar{I})$ there is a
  monomial $w\in \bar{I}$ which properly divides $\bmo_F$. The proof of this statement is identical to the
  implication \cite[Theorem 6.26]{MS} ``(a)$\Rightarrow$(g)". We recall it here for the sake of completeness.

  Let $F\notin\Sc(\bar{I})$, that is there is $G\subseteq\MinGen(\bar{I})$ such that $\bmo_F=\bmo_G$. Let us
  assume that $F$ is maximal with respect to the inclusion among the subsets of $\MinGen(\bar{I})$ with the
  same label $\bmo_F$. Therefore there is some $m'\in F$ such that $\bmo_F=\bmo_{F\setminus\{m'\}}$. If
  $\supp(\bmo_F)=\supp(\bmo_{F\setminus\{m'\}})$, then $m'\mid_p\bmo_F$ which ends the proof. Here, for a
  monomial $m=x_1^{a_1}\cdots x_n^{a_n}$, $\supp(m)=\{x_i\ :\ a_i\neq0\}$. Therefore we assume that
  $\supp(\bmo_F)\neq\supp(\bmo_{F\setminus\{m'\}})$. Since $\bmo_F=\bmo_{F\setminus\{m'\}}$, there is a
  monomial $m''\in F$, $m''\neq m'$, and a variable $x_k\in\supp(m'')$ such that $\deg_k(m')=\deg_k(m'')>0$. Here we denote by $\deg_k(m)$ the exponent of the variable $x_k$ in the monomial $m$.
  Since $m''\neq m'$, we must have that neither $m'$ nor $m''$ are in $\MinGen(\frak{m}^{\mathbf{u+1}}M)$.
  Therefore $m',m''\in\MinGen(I)$ and, since $I$ is generic, there is a monomial $w\in\MinGen(I)$ such that
  $w\mid_p\lcm(m',m'')$. Since $\lcm(m',m'')\mid\bmo_F$ the statement follows.
\end{proof}

The preceding results show that the Buchberger complex is an interesting object and encodes 
very well the combinatorics and algebra of monomial ideals. Nevertheless, there is an alternative object
which equally well generalizes the Buchberger graph from \cite{MS1} in the situation where it can be embedded
with triangular cells. For a monomial ideal $I$ and its Buchberger graph $\BuG(I)$ let $\Cl(\BuG(I))$ be
the simplicial complex whose simplices are the subsets of $\MinGen(I)$ that induce a clique in $\BuG(I)$ --
this construction is also known as the clique complex or Rips complex of $\BuG(I)$.

We conjecture:

\begin{Conjecture}
  For a monomial ideal $I$ the clique complex $\Cl(\BuG(I))$ of the Buchberger graph $\BuG(I)$ is
  contractible.
\end{Conjecture}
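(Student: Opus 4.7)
I would start with the observation that $\Bu(I)\subseteq \Cl(\BuG(I))$: if $F\in\Bu(I)$ and $\{m,m'\}\subseteq F$, then any generator $u$ with $u\mid_p\lcm(m,m')$ would also satisfy $u\mid_p\bmo_F$ by coordinate-wise propagation of proper divisibility (the same argument used in the verification that $\Bu(I)$ is a simplicial complex), contradicting $F\in\Bu(I)$; hence $F$ is a clique in $\BuG(I)$. Since $\Bu(I)$ is contractible by the main theorem of Section~\ref{sec1}, my plan is to deduce the conjecture by showing that the inclusion $\Bu(I)\hookrightarrow \Cl(\BuG(I))$ is a homotopy equivalence, or more generally that $\Cl(\BuG(I))$ retracts onto a contractible subcomplex containing $\Bu(I)$.

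My preferred tool is a closure operator on the face poset of $\Cl(\BuG(I))$. Define
\[
c(F)\;=\;F\cup\{u\in\MinGen(I):u\mid_p\bmo_F\}.
\]
The technical heart would be the following key lemma: \emph{if $F$ is a clique in $\BuG(I)$ and $u\in\MinGen(I)$ satisfies $u\mid_p\bmo_F$, then $F\cup\{u\}$ is again a clique.} Granting this, $c$ is monotone (from $\bmo_F\mid\bmo_{F'}$ for $F\subseteq F'$), extensive, and idempotent, so by the standard closure operator theorem \cite[Cor.~10.11]{Hand} the order complex of its image is homotopy equivalent to $\Cl(\BuG(I))$. The image consists of the ``saturated'' cliques---those $F$ for which every generator properly dividing $\bmo_F$ already lies in $F$---and contains the face poset of $\Bu(I)$. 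Showing this saturated subposet has contractible order complex would be the final step, which I expect to follow by a secondary crosscut/Quillen-fiber argument parallel to the proof of Lemma~\ref{LCM-1}, via the map $F\mapsto\bmo_F$ into $L_I$.

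The hard part will be the key lemma. Arguing by contradiction, suppose $w\in\MinGen(I)$ properly divides $\lcm(u,m)$ for some $m\in F$. The cases $w=u$ and $w=m$ are ruled out by minimality of generators via short coordinate computations forcing $u\mid_p m$ or $m\mid_p u$. The delicate case is $w\in F\setminus\{m\}$: naively one would like to contradict the edge $\{w,m\}$ in the clique $F$, but $w\mid_p\lcm(u,m)$ does not automatically yield $w\mid_p\lcm(w,m)$ at coordinates where $u$ dominates $m$. My plan here is to exploit $u\mid_p\bmo_F$ to produce, at each coordinate $i$ with $\deg_i u<\deg_i\bmo_F$, a witness $m''_i\in F\setminus\{m\}$ with $\deg_i m''_i>\deg_i u$, and to combine these witnesses into a pair $\{m,m'\}\subseteq F$ for which $w\mid_p\lcm(m,m')$, contradicting the clique property of $F$. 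Constructing such a pair uniformly is the delicate combinatorial step; should it resist, a fallback would be to establish that $\BuG(I)$ is dismantlable by exhibiting a vertex whose closed neighborhood is contained in that of another, which preserves the homotopy type of the clique complex under repeated folding.
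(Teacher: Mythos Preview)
The statement you are attempting to prove is presented in the paper as a \emph{conjecture}, not a theorem; the paper offers no proof and cites only the three-variable planar result and computer experiments as supporting evidence. There is therefore no paper proof to compare against, and a complete argument would settle an open problem.

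Your approach, however, has a genuine gap: the ``key lemma'' on which the closure-operator strategy rests is false. Take
\[
I=(x^5y,\ xy^5,\ z^5,\ x^2y^2z^2,\ x^4yz)\subseteq\kk[x,y,z].
\]
These five monomials are pairwise incomparable, so they form $\MinGen(I)$. Set $F=\{x^5y,\,xy^5,\,z^5\}$. The three pairwise lcms are $x^5y^5$, $x^5yz^5$, $xy^5z^5$, and one checks directly that no generator of $I$ properly divides any of them (for instance $x^4yz$ fails the $z$-, $y$-, and $x$-conditions respectively). Hence $F$ is a clique in $\BuG(I)$. Now $u:=x^2y^2z^2$ satisfies $u\mid_p\bmo_F=x^5y^5z^5$, yet
\[
\lcm(u,\,x^5y)=x^5y^2z^2\qquad\text{and}\qquad x^4yz\ \mid_p\ x^5y^2z^2,
\]
so $\{u,\,x^5y\}$ is \emph{not} an edge of $\BuG(I)$. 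Thus $F\cup\{u\}$ is not a clique, and your map $c$ does not even take values in the face poset of $\Cl(\BuG(I))$; the closure-operator argument cannot start.

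Your proposed repair---locating a pair $\{m,m'\}\subseteq F$ with $w\mid_p\lcm(m,m')$---also fails in this example: the offending $w=x^4yz$ properly divides none of the three pairwise lcms of $F$ (that is exactly why $F$ is a clique), so no such witnessing pair exists. The dismantlability fallback is an entirely separate strategy for which you give no argument. In short, the conjecture remains open and your outline, as it stands, does not prove it.
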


In particular, if the conjecture holds true then a simple reasoning shows that $\Cl(\BuG(I))$ also
supports a minimal free resolution of $I$. Clearly, $\Bu(I) \subseteq \Cl(\BuG(I))$ and therefore
the construction will not yield more cases where we can construct a cellular minimal free resolution.
Nevertheless, we consider the conjecture seems appealing from a combinatorial point of view. The
conjecture is supported by \ref{BuRes} in the three variable case and many experiments.

\section{Acknowledgement}

The authors thank Lukas Katth\"an for pointing out an error in the formulation of \ref{Buch Scarf1} in a previous
version of this paper.

\end{document}